\newtheorem{theorem}{Theorem}[section]
\newtheorem{cor}[theorem]{Corollary}
\theoremstyle{definition}
\theoremstyle{remark}
\numberwithin{equation}{section}
\begin{document}

\newcommand{\spacing}[1]{\renewcommand{\baselinestretch}{#1}\large\normalsize}
\spacing{1.14}

\title{Non-Berwaldian Randers metrics of Douglas type on four-dimensional hypercomplex Lie groups}

\author {M. Hosseini}

\address{Department of Mathematics\\ Faculty of  Sciences\\ University of Isfahan\\ Isfahan\\ 81746-73441-Iran.} \email{hoseini\_masomeh@ymail.com}

\author {H. R. Salimi Moghaddam}

\address{Department of Mathematics\\ Faculty of  Sciences\\ University of Isfahan\\ Isfahan\\ 81746-73441-Iran.} \email{hr.salimi@sci.ui.ac.ir and salimi.moghaddam@gmail.com}

\keywords{hypercomplex structure, Randers metric, Finsler metric of Berwald type, Finsler metric of Douglas type, flag curvature\\
AMS 2010 Mathematics Subject Classification: 53C26, 53C60, 22E60.}


\begin{abstract}
In this paper we classify all non-Berwaldian Randers metrics of Douglas type arising from invariant hyper-Hermitian metrics on simply connected four-dimensional real Lie groups. Also, the formulas of the flag curvature are given and it is shown that, in some directions, the flag curvature of the Randers metrics and the sectional curvature of the hyper-Hermitian metrics have the same sign.
\end{abstract}

\maketitle

\section{\textbf{Introduction}}
In this paper our aim is to characterize non-Berwaldian Randers metrics of Douglas type arising from invariant hyper-Hermitian metrics on simply connected four-dimensional real Lie groups and study their flag curvatures. Hyper-Hermitian manifolds are very interesting manifolds in mathematics which have been studied by many mathematicians. In fact the main object of HKT-geometry (Hyper-K\"{a}hler geometry with torsion) is hyper-Hermitian manifold (see \cite{Grantcharov, Joyce, Poon}). On the other hand, these spaces have found many applications in physics (see \cite{Alvarez-Freedman, Gauduchon-Tod, Gutowski-Sabra, Merkulov-Pedersen-Swann}). For example they are the target spaces for $\sigma$-models with $(4,0)$-supersymmetry (see \cite{Alvarez-Freedman}).\\
Among geometric structures, invariant structures on Lie groups and homogeneous spaces are interesting topics that deserve more study. There are many results concerning invariant hypercomplex structures on Lie groups. For example Joyce has studied the existence of invariant hypercomplex structures on compact Lie groups in \cite{Joyce}. In \cite{Dotti-Fino}, Dotti and Fino have categorized $8$-dimensional simply connected nilpotent Lie groups equipped with a left invariant hypercomplex structure. A more important result, for this paper, is the classification of invariant hypercomplex structures on 4-dimensional simply connected Lie groups given by Barberis (see \cite{Barberis}). The Levi-Civita connection and sectional curvature of these spaces have been computed by the second author of this work in \cite{Salimi1}.\\
On the other hand, using a left invariant Riemannian metric and a left invariant vector field ($1$-form), we can construct a left invariant Randers metric, a special type of Finsler metrics, on Lie groups (see \cite{Deng-Book}). Randers metrics, in general case, have been introduced by G. Randers in \cite{Randers} because of its applications in general relativity. The second author, in his previous article \cite{Salimi2}, has studied Randers metrics of Berwald type, arising from invariant hyper-Hermitian metrics, on four-dimensional simply connected Lie groups. In this work we study, the more general case, Randers metrics of Douglas type on such spaces and give explicit formulas of their flag curvatures.


\section{\textbf{Preliminaries}}
A hypercomplex structure, on a $4n$-dimensional manifold $M$, is a family ${\mathcal{H}}= \{J_i\}_{i=1,2,3}$ consisting three complex structures $J_1$, $J_2$ and $J_3$ such that they satisfy the following equations:
\begin{equation*}
J_1J_2=J_3=-J_2J_1.
\end{equation*}
In fact a hypercomplex structure consists of three fiberwise endomorphism $J_1$, $J_2$ and $J_3$ of $TM$ such that
\begin{eqnarray}
  && J_1J_2=J_3=-J_2J_1, \\
  && J_i^2=-I_{TM}, i=1,2,3,\\
  && N_i=0, i=1,2,3,
\end{eqnarray}
where $N_i$ denotes the Nijenhuis tensor of $J_i$ which is defined as follow
\begin{equation}\label{Nijenhuis tensor}
N_i(X,Y) = [X,Y]+J_i[J_iX,Y]+J_i[X,J_iY]-[J_iX,J_iY],
\end{equation}
for all vector fields $X$, $Y$ on $M$.\\
Let $g$ be a Riemannian metric on a hypercomplex manifold $(M,{\mathcal{H}})$. Then $g$ is called hyper-Hermitian if
\begin{equation*}
g(X,Y)=g(J_iX,J_iY), i=1,2,3,
\end{equation*}
for all vector fields $X$, $Y$ on $M$.\\
A hypercomplex structure ${\mathcal{H}}= \{J_i\}_{i=1,2,3}$ on a Lie group $G$ is called left invariant if, for any $a\in G$, commutes with left translation $l_a$ (see \cite{Barberis} and \cite{Salimi2}), it means that for any $a \in G$,
\begin{equation*}
J_i\circ Tl_a = Tl_a\circ J_i,
\end{equation*}
where $Tl_a$ denotes the tangent map of $l_a$.\\

Now we give some basic definitions of Finsler geometry which will be used throughout this article.\\
Let $M$ be a smooth manifold and $TM$ be its tangent bundle. By definition, a Finsler metric is a function $F:TM \longrightarrow \left[ 0, \infty \right)$ with the following properties:
\begin{itemize}
\item[(i)] $F$ is a smooth function on $TM\backslash \{0\}$,
\item[(ii)] $F(x,\lambda y)=\lambda F(x,y)$, \ \ \ \ for all $\quad  \lambda>0$,
\item[(iii)] The hessian matrix $(g_{ij}(x,y))=\left( \dfrac{1}{2} \dfrac{\partial ^2 F^2}{\partial y^i \partial y^j}\right) $ is positive definite for all $(x,y) \in TM\backslash \{0\}$.
\end{itemize}
An important family of Finsler metrics is the family of Randers metrics which arises from Riemannian metrics and 1-forms on manifolds (see \cite{Randers}). In other words, Randers metrics are as follows
\begin{equation}\label{Randers metric}
F(x,y) = \sqrt{g(x) (y,y)} + b(x) (y),
\end{equation}
where $g = g_{ij}dx^i \otimes dx^j$ is a Riemannian metric and $b = b_i dx^i$ is a 1-form on $M$.\\
The necessary and sufficient condition, to $F$ be a Finsler metric, is that $\Vert b\Vert_g =\sqrt{g^{ij} (x)b_i(x) b_j(x) } < 1$, for any $x \in M$, where $(g^{ij}(x))$ is the inverse matrix of $(g_{ij}(x))$ (for more details see  \cite{Chern-Shen}).
Easily we can rewrite any Randers metric $F(x,y) =  \sqrt{g(x) (y,y)} + b(x) (y)$ as follows
\begin{equation}\label{Randers metric}
F(x,y) = \sqrt{g(x) (y,y)} + g(x) ( Q(x), y ),
\end{equation}
where $Q$ denotes the dual vector field of $b$ (see \cite{Deng-Book} and \cite{Salimi2}).\\
Let $(M,F)$ be a Finsler manifold. In a standard local coordinate system $(x^i,y^i)$ in $TM$, the spray coefficients $G_i$ of $F$ are defined by
\begin{equation}\label{spray coefficients}
G^i:=\frac{1}{4}g^{il}([F^2]_{x^my^l}y^m-[F^2]_{x^l}).
\end{equation}
A Finsler metric is called a Douglas metric if there exists a positively $y$-homogeneous function of degree one $P(x ,y)$ such that the spray coefficients $G^i$ satisfy the following equations:
\begin{equation}\label{spray coefficients of Douglas metrics}
    G^i=\frac{1}{2}\Gamma^i_{jk}(x)y^jy^k+P(x,y)y^i.
\end{equation}
A special case happens when $P(x ,y)=0$. Then the spray coefficients $G^i=\frac{1}{2}\Gamma^i_{jk}(x)y^jy^k$ are quadratic in $y$. In this case, $F$ is called a Berwald metric (see \cite{Bacso-Matsumoto} and \cite{Chern-Shen}). \\
Similar to the Riemannian case, a Finsler metric $F$ on a Lie group $G$ is said to be left invariant if
\begin{equation}
F(x,y)=F(e,T_x\,l\,_{x^{-1}}y), \ \ \ \forall x\in G, y\in T_x G,
\end{equation}
where $e$ denotes the unit element of $G$.\\
Fix a left invariant Riemannian metric $g$ on a Lie group $G$ and consider a left invariant vector field $Q$ such that $g(Q,Q) < 1$, then we have a left invariant Randers metric on $G$, using the formula \ref{Randers metric}.\\
Flag curvature which is a generalization of the concept of sectional curvature to Finsler geometry
is an important quantity in Finsler geometry. It is computed by the following formula:
\begin{equation}\label{flag curvature main formula}
K(P,V)=\dfrac{g_V \left( R_V \left( U\right) ,U\right) }{g_V (V,V) g_V (U,U)-g_V^2 (V,U)},
\end{equation}
where $P=\textit{span} \lbrace U,V \rbrace $, $g_V(U,W)=\frac{1}{2}\frac{\partial ^2}{\partial s \partial t}F^2(V+sU+tW)\mid _{s=t=0}$, $R_V(U)=R(U,V)V=\nabla _U \nabla _V V -\nabla _V \nabla _U V- \nabla _{[U,V]}V$ and $\nabla$ is the Chern connection of $F$ (for more details see \cite{Bao-Chern-Shen,Chern-Shen}).\\


\section{Randers metrics of Douglas type and their flag curvature}
In \cite{Salimi2}, Randers metrics of Berwald type, arising from invariant hyper-Hermitian metrics, on four-dimensional simply connected Lie groups have been studied. The main tool which is used for this characterization is the classification of four-dimensional hypercomplex Lie groups given by Barberis in \cite{Barberis}. Let $G$ be a simply connected four-dimensional real Lie group equipped with a left invariant hyper-Hermitian metric, and $\frak{g}$ denotes its Lie algebra. She has shown  that, if $\frak{g}$ isn't commutative, then it is isomorphic to one of the following Lie algebras:
\begin{itemize}
\item[(1)] $[Y,Z]=W,\, [Z,W]=Y,\, [W,Y]=Z,\, X$ central,
\item[(2)] $[X,Z]=X,\, [Y,Z]=Y,\, [X,W]=Y,\, [Y,W]=-X$,
\item[(3)] $[X,Y]=Y,\, [X,Z]=Z,\, [X,W]=W$,
\item[(4)] $[X,Y]=Y,\, [X,Z]=\dfrac{1}{2}Z,\, [X,W]=\dfrac{1}{2}W,\, [Z,W]=\dfrac{1}{2}Y$,
\end{itemize}
where $\{ X, Y, Z, W\}$ is an orthonormal basis for $\mathfrak{g}$ (for more details see \cite{Barberis}).\\

Now using the above result we give the following theorem which classify Randers metrics of Douglas type on such spaces.

\begin{theorem}\label{classification theorem}
Let $G$ be a non-commutative four-dimensional hypercomplex simply connected Lie group equipped with a left invariant hyper-Hermitian metric $g$. Suppose that $F$ is a non-Berwaldian Randers metric of Douglas type defined by $g$ and a left invariant vector field $Q$. Then the Lie algebra $\frak{g}$ of $G$ and the vector field $Q$ are as follows:
\begin{itemize}
  \item[(i)] $\mathfrak{g}$ belongs to case (2), $Q=pZ+qW$, $\sqrt{p^2+q^2}<1$ and $p\neq0$,
  \item[(ii)] $\mathfrak{g}$ belongs to case (3), $Q=qX$, $|q|<1$ and $q\neq0$,
  \item[(iii)] $\mathfrak{g}$ belongs to case (4), $Q=qX$, $|q|<1$ and $q\neq0$,
\end{itemize}
where $p, q\in \Bbb{R}$ and $\{ X, Y, Z, W\}$ is an orthonormal basis for $\mathfrak{g}$ with respect to $g$.
\end{theorem}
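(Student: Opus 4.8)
The plan is to reduce the whole problem to two purely Lie-algebraic conditions, one characterizing the Douglas property and one characterizing the Berwald property, and then to run both through Barberis's four cases. First I would invoke the standard characterization (see \cite{Bacso-Matsumoto}): a Randers metric $F=\sqrt{g(y,y)}+\beta(y)$ with $\beta(y)=g(Q,y)$ is of Douglas type if and only if the $1$-form $\beta$ is closed, i.e. $d\beta=0$ (equivalently, the antisymmetric part of $b_{i|j}$ vanishes), while $F$ is of Berwald type if and only if $\beta$ is parallel with respect to the Levi-Civita connection of $g$, i.e. $\nabla Q=0$. Since all data are left invariant, for left-invariant $X,Y$ the exterior derivative collapses to $d\beta(X,Y)=-\beta([X,Y])=-g(Q,[X,Y])$, so $\beta$ is closed if and only if $Q$ is $g$-orthogonal to the derived algebra $[\mathfrak{g},\mathfrak{g}]$. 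This converts the Douglas condition into the single linear constraint $Q\perp[\mathfrak{g},\mathfrak{g}]$.

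Next I would compute $[\mathfrak{g},\mathfrak{g}]$ in each case and read off the admissible $Q$. Writing $Q=q_1X+q_2Y+q_3Z+q_4W$, the brackets give $[\mathfrak{g},\mathfrak{g}]=\mathrm{span}\{Y,Z,W\}$ in case (1), forcing $Q=q_1X$; $[\mathfrak{g},\mathfrak{g}]=\mathrm{span}\{X,Y\}$ in case (2), forcing $Q=pZ+qW$; and $[\mathfrak{g},\mathfrak{g}]=\mathrm{span}\{Y,Z,W\}$ in both cases (3) and (4), forcing $Q=qX$. Imposing the Finsler admissibility $g(Q,Q)<1$ then yields $\sqrt{p^2+q^2}<1$ in case (2) and $|q|<1$ in cases (3) and (4).

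Finally I would eliminate the Berwaldian subfamilies by testing parallelism of the surviving $Q$, using the Levi-Civita connection already computed in \cite{Salimi1} together with the Koszul formula $2g(\nabla_AB,C)=g([A,B],C)-g([B,C],A)+g([C,A],B)$. In case (1), since $X$ is central and $[\mathfrak{g},\mathfrak{g}]\perp X$, one checks $\nabla_AX=0$ for every $A$, so $Q=q_1X$ is always parallel and $F$ is necessarily Berwald; this excludes case (1) entirely. In case (2) one verifies that $W$ is parallel while $\nabla_XZ\neq0$, so $Q=pZ+qW$ is parallel if and only if $p=0$, whence the non-Berwaldian metrics are exactly those with $p\neq0$. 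In cases (3) and (4) the relation $[X,Y]=Y$ already forces $\nabla_YX\neq0$, so $X$ is not parallel and $Q=qX$ is non-Berwaldian precisely when $q\neq0$. Collecting these constraints produces exactly the three families (i)--(iii).

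I expect the main obstacle to be the parallelism step of the last paragraph: separating genuinely Douglas-but-not-Berwald metrics from Berwaldian ones requires the explicit connection in each algebra and a careful check of $\nabla_A Q$ in every direction $A$, and it is precisely here that the exceptional conditions $p\neq0$ (case 2) and $q\neq0$ (cases 3 and 4), together with the complete exclusion of case (1), are forced.
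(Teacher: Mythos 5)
Your proposal is correct and reaches the same classification by the same overall strategy as the paper: reduce the Douglas condition to the linear constraint $Q\perp[\mathfrak{g},\mathfrak{g}]$, run Barberis's four cases through it, and then strip out the Berwaldian subfamilies. The difference is in how the two key criteria are justified. The paper simply cites \cite{An-Deng Monatsh} for the statement that a left invariant Randers metric is of Douglas type if and only if $Q$ is $g$-orthogonal to the derived algebra, and cites \cite{Salimi2} for the determination of which of the resulting metrics are Berwaldian (all of them in case (1), exactly those with $p=0$ in case (2), none in cases (3) and (4)). You instead derive the orthogonality condition from the Bacso--Matsumoto closedness criterion via $d\beta(X,Y)=-g(Q,[X,Y])$ for left invariant data, and you replace the appeal to \cite{Salimi2} by a direct Koszul-formula verification of whether $Q$ is parallel ($X$ parallel in case (1); $W$ parallel but $\nabla_XZ\neq0$ in case (2); $g(\nabla_YX,Y)=-1\neq0$ in cases (3) and (4)). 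I checked these computations and they are right, so your argument is self-contained where the paper's is citation-driven; the cost is that you must carry out the connection computations explicitly, and you implicitly use the standard fact that a Randers metric is Berwaldian exactly when the defining $1$-form is parallel with respect to the Levi-Civita connection of $g$, which you should state with a reference.
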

\begin{proof}
We know that $F$ is of Douglas type if and only if $Q$ is $g$-orthogonal to $[\frak{g},\frak{g}]$ (see \cite{An-Deng Monatsh}). Now it is sufficient to check, the classification given in \cite{Barberis}, case by case and use the results of \cite{Salimi2}.\\
Let $(G,g)$ belong to the spaces of case (1) of the classification given by Barberis, then easily we can see $Q$ is orthogonal to $[\frak{g},\frak{g}]$ if and only if $Q=qX$. But, in \cite{Salimi2}, it is shown that in this case $F$ is of Berwald type. So in case (1) we have not any non-Berwaldian Randers metric of Douglas type. For the case (2), $Q$ is perpendicular to the derived algebra if and only if $Q=pZ+qW$, for some $p,q \in \mathbb{R}$. Now it is sufficient to let $p\neq0$, because if $p=0$ then $F$ is of Berwald type by \cite{Salimi2}. \\
Similarly, in cases (3) and (4), the only left invariant vector fields which are perpendicular to the ideal $[\mathfrak{g} , \mathfrak{g} ]$ are of the form $Q=qX$ for some $q \in \mathbb{R}$. In \cite{Salimi2}, it has been proven that the cases (3) and (4) do not admit any Berwaldian Randers metric arising from left invariant hyper-Hermitian metric $g$. So any Randers metrics defined by $g$ and $Q$ are non-Berwaldian Randers metrics of Douglas type.
\end{proof}

Therefore, using the above theorem and \cite{Salimi2}, we have the following corollary.
\begin{cor}
Lie groups of case (1) admits only Berwaldian Randers metrics of Douglas type arising from left invariant hyper-Hermitian metrics $g$. Spaces belonging to case (2) admit both Berwaldian Randers metric and also non-Berwaldian Randers metrics of Douglas type. Lie groups of cases (3) and (4) admit only  non-Berwaldian Randers metrics of Douglas type arising from left invariant hyper-Hermitian metrics.
\end{cor}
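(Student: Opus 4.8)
The plan is to combine the characterization of Douglas-type Randers metrics via orthogonality with Barberis's explicit list, and then import the Berwald/non-Berwald distinctions already established in \cite{Salimi2}. The central fact I would invoke is the result of An--Deng \cite{An-Deng Monatsh}: a left invariant Randers metric $F$ built from a left invariant Riemannian metric $g$ and a left invariant vector field $Q$ is of Douglas type if and only if $Q$ is $g$-orthogonal to the derived algebra $[\mathfrak{g},\mathfrak{g}]$. This reduces the problem to a finite, purely linear-algebraic computation on each of the four Lie algebras in Barberis's classification, since the constraint $g(Q,[\mathfrak{g},\mathfrak{g}])=0$ together with $g(Q,Q)<1$ pins down the admissible $Q$.

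The key steps, carried out case by case, are as follows. First I would compute $[\mathfrak{g},\mathfrak{g}]$ for each of the four algebras from the given bracket relations. In case (1) the brackets $[Y,Z]=W$, $[Z,W]=Y$, $[W,Y]=Z$ give $[\mathfrak{g},\mathfrak{g}]=\mathrm{span}\{Y,Z,W\}$, so orthogonality forces $Q=qX$; here I would cite \cite{Salimi2} to conclude that such $F$ is automatically Berwald, hence case (1) yields no non-Berwaldian Douglas metric. In case (2), the brackets $[X,Z]=X$, $[Y,Z]=Y$, $[X,W]=Y$, $[Y,W]=-X$ give $[\mathfrak{g},\mathfrak{g}]=\mathrm{span}\{X,Y\}$, so the orthogonal complement is $\mathrm{span}\{Z,W\}$ and $Q=pZ+qW$; I would then impose $p\neq 0$, invoking \cite{Salimi2} to note that $p=0$ is exactly the Berwald subcase, so requiring $p\neq0$ selects precisely the non-Berwaldian ones. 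In cases (3) and (4) the same bracket computation yields $[\mathfrak{g},\mathfrak{g}]=\mathrm{span}\{Y,Z,W\}$ (note in case (4) the extra bracket $[Z,W]=\tfrac12 Y$ lands inside this span and does not enlarge it), so again $Q=qX$; but now \cite{Salimi2} tells us these algebras admit no Berwaldian Randers metric at all, so every such $F$ with $q\neq0$ is non-Berwaldian of Douglas type. Throughout, the normalization condition $\|Q\|_g<1$ translates into $\sqrt{p^2+q^2}<1$ in case (2) and $|q|<1$ in cases (3) and (4), using that $\{X,Y,Z,W\}$ is $g$-orthonormal.

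The only genuine subtlety, rather than a true obstacle, is ensuring the interface with \cite{Salimi2} is used correctly: the theorem must distinguish \emph{Berwald} from \emph{non-Berwald} among the Douglas metrics, and this distinction is not visible from the orthogonality condition alone but is supplied entirely by the prior classification of Berwald-type metrics. Thus the main care is in matching each admissible $Q$ against the list of Berwald cases in \cite{Salimi2}, so that the constraints $p\neq0$ (case 2) and $q\neq0$ (cases 3, 4) are justified as exactly excising the Berwaldian locus. The bracket computations themselves are routine once the relations are read off, so I would present them briefly and let the imported Berwald classification carry the conceptual weight of the argument.
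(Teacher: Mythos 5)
Your argument is correct and is essentially the paper's own: the paper deduces this corollary directly from Theorem \ref{classification theorem}, whose proof uses exactly the ingredients you describe (the An--Deng orthogonality criterion for Douglas type, the case-by-case bracket computation of $[\mathfrak{g},\mathfrak{g}]$ in Barberis's list, and the Berwald classification imported from \cite{Salimi2}). You have simply inlined the proof of the theorem rather than citing it, so no substantive difference.
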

Now we continue with computing the flag curvature of the Finsler spaces given in theorem \ref{classification theorem}. Flag curvatures of Berwaldian Randers metrics arising from left invariant hyper-Hermitian metrics have been studied in \cite{Salimi2}, so here we give the flag curvature formulas in the case of non-Berwaldian Randers metrics of Douglas type.\\
In \cite{Deng-Hou Canad. J}, the authors have given the following  formula for the flag curvature of left invariant Randers metrics of Douglas type
\begin{equation}\label{flag curvature of Randers metrics of Douglas type}
K^F(P,V)=\dfrac{g^2}{F^2} K^g(P)+\dfrac{1}{4F^4} \left( 3g^2\left( U\left( V,V\right),Q \right)-4Fg\left( U\left( V,U\left( V,V\right) \right),Q  \right)   \right),
\end{equation}
where $K^g$ and $K^F$ denote the sectional curvature of $g$ and the flag curvature of $F$, respectively, and $U:\mathfrak{g} \times \mathfrak{g} \longrightarrow \mathfrak{g}$ is a bilinear symmetric map defined as follows
\begin{equation*}
2g\left( U\left( V,S\right),R \right) = g\left( [R,V],S\right) + g\left( [R,S],V\right), \qquad \forall \ \ R, S, V \in \mathfrak{g}.
\end{equation*}
In our previous work \cite{Hosseini-Salimi1}, the above formula has been simplified to
\begin{equation}\label{Douglas flag curvature}
K^F(P,V)=\dfrac{g^2}{F^2} K^g(P) + \dfrac{1}{4F^4} \left( 3g^2\left( [Q,V],V\right) - 2F \left( g\left( \left[ \left[ Q,V\right],V \right],V \right) - g\left( V,\left[ Q,ad^*_VV\right] \right)   \right) \right),
\end{equation}
where $ad^*_V$ denotes the transpose of $ad_V$ with respect to $g$.\\
Now we compute the flag curvature for any cases.
Let $(P,V)$ be an arbitrary flag at $e$ such that $V=aX+bY+cZ+dW$ for some $a,b,c,d \in \mathbb{R}$.\\

\textbf{Lie groups of the form (i)}.\\
In this case we have $Q=pZ+qW$ for some $p,q \in \mathbb{R}$. A direct computation shows that
\begin{align*}
&g\left( [Q,V],V\right) = -p(a^2+b^2), \\
&g\left( \left[ \left[ Q,V\right],V \right],V \right) = (a^2+b^2) (dq-cp) , \\
&g\left( V,\left[ Q,ad^*_VV\right] \right) = (a^2+b^2) (dq+cp) .
\end{align*}
Hence, using the formula \ref{Douglas flag curvature} we have:
\begin{equation}\label{flag curvature case i}
K^F(P,V)=\dfrac{g^2}{F^2}K^g(P)+\dfrac{1}{4F^4}\left( 3p^2\left( a^2+b^2\right)^2+4cpF\left( a^2+b^2\right)  \right).
\end{equation}

\textbf{Lie groups of the form (ii)}.\\
As it was said previously, in this case we have $Q=qX$ for some $q \in \mathbb{R}$. Similarly, an straightforward computation shows that
\begin{align*}
&g\left( [Q,V],V\right) = q(b^2+c^2+d^2), \\
&g\left( \left[ \left[ Q,V\right],V \right],V \right) =-aq(b^2+c^2+d^2) , \\
&g\left( V,\left[ Q,ad^*_VV\right] \right) =aq(b^2+c^2+d^2) .
\end{align*}
So the flag curvature in this case is as follows:
\begin{equation}\label{flag curvature case ii}
K^F(P,V)=\dfrac{g^2}{F^2}K^g(P)+\dfrac{1}{4F^4}\left( 3q^2\left( b^2+c^2+d^2\right)^2+4aqF\left( b^2+c^2+d^2\right)  \right).
\end{equation}

\textbf{Lie groups of the form (iii)}.\\
As mentioned in theorem \ref{classification theorem}, in this case we have $Q=qX$. Similar to the above we have
\begin{align*}
&g\left( [Q,V],V\right) = q(b^2+\dfrac{1}{2}c^2+\dfrac{1}{2}d^2), \\
&g\left( \left[ \left[ Q,V\right],V \right],V \right) =-aq(b^2+\dfrac{1}{4}c^2+\dfrac{1}{4}d^2) , \\
&g\left( V,\left[ Q,ad^*_VV\right] \right) =aq(b^2+\dfrac{1}{4}c^2+\dfrac{1}{4}d^2).
\end{align*}
Substituting the above equations in the formula \ref{Douglas flag curvature} shows that
\begin{equation}\label{flag curvature case iii}
K^F(P,V)=\dfrac{g^2}{F^2}K^g(P)+\dfrac{1}{4F^4}\left( \dfrac{3}{4}q^2\left(2 b^2+c^2+d^2\right)^2+aqF\left( 4b^2+c^2+d^2\right)  \right).
\end{equation}
The above formulas of flag curvature lead to the following corollary.
\begin{cor}
Let $F$ be a left invariant non-Berwaldian Randers metric of Douglas type on a four-dimensional hypercomplex simply connected Lie group $G$, arising from a left invariant hyper-Hermitian Riemannian metric $g$ and a left invariant vector field $Q$. Then the flag curvature of $F$ and the sectional curvature of $g$ have the same sign in some directions.
\end{cor}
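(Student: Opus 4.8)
The plan is to extract the conclusion directly from the three flag curvature formulas \eqref{flag curvature case i}, \eqref{flag curvature case ii}, and \eqref{flag curvature case iii}. Each of them has the form
\[
K^F(P,V)=\frac{g^2}{F^2}\,K^g(P)+\frac{1}{4F^4}\,C(V),
\]
where $C(V)$ denotes the bracketed correction term, a polynomial in the components $a,b,c,d$ of the flagpole $V=aX+bY+cZ+dW$. Since $g^2=g(V,V)>0$ and $F^2=F(V)^2>0$ for every $V\neq0$, the coefficient $g^2/F^2$ is strictly positive. Consequently, for any flag $(P,V)$ with $C(V)=0$ we obtain $K^F(P,V)=(g^2/F^2)\,K^g(P)$, so that $K^F(P,V)$ inherits exactly the sign of $K^g(P)$. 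It therefore suffices to exhibit, in each case, a nonempty family of directions on which $C(V)$ vanishes identically.

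First I would dispose of case (i), where the correction factors as
\[
C(V)=3p^2(a^2+b^2)^2+4cpF(a^2+b^2)=(a^2+b^2)\bigl(3p^2(a^2+b^2)+4cpF\bigr),
\]
so that $C(V)=0$ whenever $a=b=0$; thus every $V$ in the plane spanned by $Z$ and $W$ yields matching signs. For cases (ii) and (iii) I would argue identically: the corrections are, up to positive constants, multiples of $b^2+c^2+d^2$ in case (ii), and of $2b^2+c^2+d^2$ together with $4b^2+c^2+d^2$ in case (iii), each of which vanishes precisely when $b=c=d=0$. Hence choosing $V=aX$ (the direction of $Q$) annihilates $C(V)$ in both cases and again forces $\operatorname{sign}K^F=\operatorname{sign}K^g$.

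To finish, I would note that for any such nonzero $V$ one can select $U\in\mathfrak{g}$ linearly independent of $V$, so that $P=\operatorname{span}\{U,V\}$ is a genuine $2$-plane and $(P,V)$ an admissible flag; the identity $K^F(P,V)=(g^2/F^2)\,K^g(P)$ then gives the asserted agreement of signs in these directions. The computations producing the three formulas are already in place, so the single substantive point is the factorization observation above. The main, and rather mild, obstacle is to confirm that $C(V)$ vanishes \emph{identically} on the chosen subspaces, rather than merely keeping a constant sign there, and---if one wishes the statement to carry content beyond the trivial case---to invoke the sectional curvature computations of \cite{Salimi1} to guarantee that $K^g(P)$ is actually nonzero for some flag of the chosen form. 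Everything else follows at once from the positivity of $g^2/F^2$.
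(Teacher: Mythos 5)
Your proposal is correct and follows essentially the same route as the paper, which simply observes that the correction term vanishes for $V\in\mathrm{span}\{Z,W\}$ in case (i) and $V\in\mathrm{span}\{Q\}$ in cases (ii) and (iii), leaving $K^F=(g^2/F^2)K^g$ with $g^2/F^2>0$. Your additional remarks (checking the vanishing is identical on those subspaces, and the caveat about $K^g$ possibly being zero) are reasonable refinements but do not change the argument.
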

\begin{proof}
It is sufficient to consider $V\in \textsf{span}\{Z,W\}$ for case (i), and $V\in\textsf{span}\{Q\}$ in cases (ii) and (iii).
\end{proof}

{\large{\textbf{Acknowledgment.}}} We are grateful to the office of Graduate Studies of the University of Isfahan for their support. This research was supported by the Center of Excellence for Mathematics at the University of Isfahan.

\end{document}